\newtheorem{definition}{Definition}
\newtheorem{theorem}[definition]{Theorem}
\newtheorem{lemma}[definition]{Lemma}
\newcommand{\comment}[1]{}
\newcommand{\emtext}[1]{\text{\em #1}}
\newcommand{\mymargin}[1]{% 
  \marginpar{%
    \begin{minipage}{\marginparwidth}\small%
      \begin{flushleft}%
        #1%
      \end{flushleft}%
   \end{minipage}%
  }%
}%
\renewcommand{\mymargin}[1]{}%
\newcommand{\sm}{\setminus}
\newcommand{\mm}{-} % matroid minus as min M-X
\newcommand{\selfcite}[1]{$\!\!${\bf\cite{#1}}}
\newcommand{\cf}{\kappa}
\newcommand{\mfc}{\ensuremath{M_{\rm FC}}}% finite cycles
\newcommand{\mc}{\ensuremath{M_{\rm C}}}% topological cycles
\newcommand{\mfb}{\ensuremath{M_{\rm FB}}}% finite bonds
\newcommand{\I}{\ensuremath\mathcal{I}}
\newcommand{\Imax}{\ensuremath\mathcal{I^{\rm max}}}
\newcommand{\eends}{\ensuremath\mathcal{E}} % edge-ends
\newcommand{\itop}[1]{||#1||}
\title{Matroid and Tutte-connectivity  in infinite graphs}
\author{Henning Bruhn}
\date{}
\begin{document}
\maketitle

\begin{abstract}
We relate matroid connectivity to Tutte-connectivity in an infinite graph. 
Moreover, we show that the two cycle matroids, the finite-cycle matroid
and the cycle matroid, in which also infinite cycles are taken into account, 
have the same connectivity function. As an application we re-prove
that, also for infinite graphs, Tutte-connectivity is invariant under 
taking dual graphs.
\end{abstract}

\section{Introduction}

This work is part of a project to develop a theory for infinite
matroids that is analogous to its finite counterpart. 
In the initial paper of this project~\cite{infaxioms}, 
we extended extended previous work of Higgs~\cite{Higgs69,Higgs69c} 
and Oxley~\cite{Oxley92}  by giving equivalent 
 definitions of (finite or infinite)
matroids in terms of independence, bases, circuits, closure and (relative) rank,
just as one is used to for finite matroids. 
Since then, in a series of papers~\cite{2sepmat,infintmat,matunion,matintbase,thindual}, 
several other aspects of infinite matroids have
been explored, among them graphic matroids~\cite{matgraphs}
and  matroid connectivity~\cite{linkingthm}.

These two last aspects are the focus of the current work: Connectivity in 
graphic matroids. For cycle matroids of finite graphs matroid connectivity
translates into a purely graph theoretic notion.
A graph  $G$ is \emph{$k$-Tutte-connected} if for every $\ell\leq k$ and every
partition $X,Y$ of its edge set into sets of at least $\ell$ edges each, 
the number of vertices incident with both an edge in $X$ and an edge in $Y$
is greater than~$\ell$. 
Tutte~\cite{tutte66} proved that a finite graph is $k$-Tutte-connected if and only if
its cycle matroid is $k$-connected.

The main result of this work is an extension of this fact to infinite graphs 
and matroids. For this, let us call a graph $G$ \emph{finitely separable}
if any two vertices may be separated by the deletion finitely 
many edges, and let us define its \emph{finite-cycle matroid},
by declaring any edge set not containing the edge set of a finite cycle 
to be independent. 

\begin{theorem}\label{mainthm}
Let $k\geq 2$ be an integer.
A finitely separable graph is $k$-Tutte-connected if and only if its finite-cycle matroid
is $k$-connected.
\end{theorem}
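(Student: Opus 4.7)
The plan is to isolate a purely combinatorial formula that relates the matroid connectivity function $\lambda_{\mfc}$ of the finite\nobreakdash-cycle matroid to the number of ``common'' vertices of an edge partition, turning the theorem into bookkeeping. For a partition $(X,Y)$ of $E(G)$ put $b(X,Y) := |\{v\in V(G) : v \text{ is incident with an edge in }X\text{ and with an edge in }Y\}|$. In a finite connected graph, writing $c_X, c_Y$ for the number of components of the spanning subgraphs with edge sets $X, Y$, a direct rank computation yields
\[
\lambda_{M(G)}(X) \;=\; b(X,Y)\;-\;c_X\;-\;c_Y\;+\;1.
\]
In particular, once $X$ and $Y$ are each connected (which can be arranged by shuffling isolated components to the other side without changing $|X|$, $|Y|$ or $b$), one has $\lambda = b-1$. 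This already converts Tutte's condition ``$b(X,Y) > \ell$ whenever $|X|, |Y|\geq \ell$'' into ``$\lambda(X) \geq \ell$ whenever $|X|, |Y|\geq \ell$'', which is precisely the absence of an $\ell$-separation.

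First I would verify the finite formula carefully and then lift it to the infinite case for $\mfc$. Here the earlier result of the paper, that $\mfc$ and $\mc$ have the same connectivity function, is invoked so that one may choose whichever cycle matroid is more convenient when computing $\lambda$, and results from~\cite{linkingthm,matgraphs} are used to interpret the connectivity function of an infinite matroid in the right way. The finitely separable hypothesis is exactly what is needed to link $\lambda_{\mfc}$ and $b$: it ensures that whenever $\lambda_{\mfc}(X)$ is finite, only finitely many vertices belong to the boundary, and that a partition with finite vertex boundary has finite $\lambda$. The technical heart is to reduce each candidate $\lambda$-partition to a finite subgraph, built by contracting on one side of the boundary and retaining on the other, where the classical finite formula applies, and then to transport the equality back to $G$.

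With the formula in hand, both directions of the theorem fall out. If $\mfc(G)$ is $k$-connected but $G$ is not $k$-Tutte-connected, a witnessing partition $(X,Y)$ with $|X|,|Y|\geq \ell\leq k$ and $b(X,Y)\leq \ell$ would, after cleaning up components, produce $\lambda_{\mfc}(X) < \ell$, hence an $\ell$-separation, contradicting $k$-connectivity. Conversely, any $\ell$-separation of $\mfc$ for $\ell < k$ gives a partition with $b(X,Y)\leq \ell$ and both sides of size at least $\ell$, contradicting $k$-Tutte-connectivity.

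The main obstacle will be the infinite version of the formula: the rank function of $\mfc$ can be infinite on infinite sets, so $\lambda$ must be computed through relative ranks or through the connectivity function of the paper's infinite-matroid framework, and one must carefully exploit finite separability to rule out the pathological configurations (finite $\lambda$ with infinite boundary, or vice versa) that have no finite analogue. Everything else is linear algebra on spanning forests and component counts.
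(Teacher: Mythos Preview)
Your overall architecture matches the paper's: establish the formula
\[
\cf_{\mfc(G)}(X)=b(X,Y)-c(X)-c(Y)+1
\]
(with $\cf=\infty$ iff $b=\infty$, using finite separability), and then read off both directions. The forward direction (Tutte-separation $\Rightarrow$ matroid separation) is indeed immediate from the formula, since $c(X),c(Y)\geq 1$ forces $\cf\leq b-1$.

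The genuine gap is in your converse direction. You assert that ``any $\ell$-separation of $\mfc$ gives a partition with $b(X,Y)\leq \ell$''. This is false. Take $G$ $2$-connected, let $X$ be the edge set of two vertex-disjoint triangles all of whose six vertices lie in $V[Y]$, and let $Y=E(G)\setminus X$ be connected. Then $b=6$, $c(X)=2$, $c(Y)=1$, so $\cf=4$; with $|X|=6$ this is a $5$-separation, yet $b=6>5$. Your earlier remark that one can ``shuffle isolated components to the other side without changing $|X|,|Y|$ or $b$'' is also incorrect: moving a component of $(V[X],X)$ into $Y$ changes all three quantities in general, and may destroy the constraint $|X|\geq\ell$.

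What the paper actually does here is more delicate. Among all $\ell$-separations one picks $(X,Y)$ minimizing $c(X)+c(Y)$; one shows first that the (infinite) side $Y$ is then connected, next that every component $K$ of $(V[X],X)$ satisfies $|V(K)\cap V[Y]|\leq \ell$, and finally that some component $M$ has $|E(M)|\geq |V(M)\cap V[Y]|$. The pair $(E(M),\,E(G)\setminus E(M))$ is then a $k$-Tutte-separation for some $k\leq\ell$. Each of these three steps is a short counting argument against the formula, but none of them is automatic, and the detour through a single well-chosen component is exactly what your sketch is missing.

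Your plan for the formula itself (reduce to a finite auxiliary graph by contracting across the finite boundary) is plausible and different from the paper's direct spanning-forest computation; either should work, but note that to get the ``$\cf=\infty\Leftrightarrow b=\infty$'' half you will still need something like the paper's Lemma~\ref{mat:infint}, which produces infinitely many almost-disjoint finite cycles straddling $X$ and $Y$---this is where the $2$-connectedness assumption and finite separability do real work, and your sketch does not indicate how you would obtain the $\Leftarrow$ implication.
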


If the graph in the theorem is infinite, the finite-cycle matroid clearly will be 
infinite as well. But what does it mean for an infinite matroid to be $k$-connected?
A finite matroid $M$ is $k$-connected if for any $\ell\leq k$ and any partition 
of its ground set into two sets $X,Y$ of at least $\ell$ elements each
it follows that
\(
r(X)+r(Y)-r(M)\geq \ell.
\)
Clearly, this definition is useless for infinite matroids as the involved
ranks will usually be infinite. In~\cite{linkingthm} we therefore gave 
a rank-free definition that carries over to infinite matroids. 
To argue that our definition is the right one, we showed that this notion
of connectivity has the same properties as in finite matroids and we,
furthermore, extended Tutte's linking theorem to at least a large subclass
of infinite matroids. Theorem~\ref{mainthm} confirms our claim further. 

\medskip
In~\cite{matgraphs}, we observed that any {finitely separable}
graph has not one but two cycle matroids: The {finite-cycle matroid} 
and the \emph{cycle matroid}, in which any edge set containing 
a finite or infinite cycle is said to be dependent. Here, an infinite cycle 
in the graph is the homeomorphic image of the unit circle in a natural
topological space obtained from the graph (often by compactifying it). This definition
was proposed by Diestel and K\"uhn in a completely graph-theoretical context 
and was subsequently seen to be extremely fruitful as it allows to extend
virtually any result about cycles in a finite graph to at least 
a large class of infinite graphs; see Diestel~\cite{diestelBook10} for an introduction.  

The cycle matroid and the finite-cycle matroid coincide in a finite 
graph but will usually be different in infinite graphs. However, 
as we shall observe in Theorem~\ref{mat:sameconn}, they always have the same connectivity
and even the same connectivity function.

Finally, as an application of our argumentation, we get another 
extension of a result known for finite graphs: Tutte-connectivity 
is invariant under taking duals. 
\begin{theorem}$\!\!${\bf\cite{endduality}}\label{tconnthm}
Let $G$ and $G^*$ be a pair of dual graphs, and let $k\geq 2$.
Then $G$ is $k$-Tutte-connected if and only if $G^*$ is
$k$-Tutte-connected.
\end{theorem}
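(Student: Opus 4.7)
The plan is to translate $k$-Tutte-connectivity on both sides into a matroidal statement about the cycle matroid, and then exploit matroid duality. Theorem~\ref{mainthm} converts between graphs and their finite-cycle matroids; Theorem~\ref{mat:sameconn} bridges $\mfc$ and $\mc$; and the remaining equivalence will reduce to two ingredients: that matroid $k$-connectivity is invariant under matroid duality, and that the matroid dual of $\mc(G)$ can be identified with $\mc(G^*)$.

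More concretely, I would proceed in three steps. First, by Theorem~\ref{mainthm} applied to $G$ and to $G^*$ (both finitely separable, as this is built into being a pair of dual graphs), $G$ is $k$-Tutte-connected iff $\mfc(G)$ is $k$-connected, and similarly for $G^*$. Second, Theorem~\ref{mat:sameconn} gives that $\mfc(G)$ and $\mc(G)$ have the same connectivity function and hence the same $k$-connectivity, and likewise for $G^*$. Third, one needs the matroid-dual identity $\mc(G)^{*}=\mc(G^*)$: this rests on the fact, established in \cite{matgraphs}, that topological bonds of $G$ are precisely the topological cycles of the dual graph $G^*$, which is the graph-theoretic counterpart of matroid duality. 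Combined with the fact that the rank-free connectivity function used in \cite{linkingthm} is symmetric in a set and its complement and is invariant under replacing $M$ by $M^*$, this yields the $k$-connectivity equivalence for $\mc(G)$ and $\mc(G^*)$, which then chains back through steps 2 and 1 on the $G^*$-side.

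The crux lies in using the right cycle matroid at each step. The finite-cycle matroid $\mfc$ is the one tied to Tutte-connectivity by Theorem~\ref{mainthm}, but in general $\mfc(G)^{*}$ is \emph{not} $\mfc(G^*)$; it is the topological cycle matroid $\mc$ that behaves well under graph duality, precisely because infinite cycles are needed to form the matroid dual of bonds. This is exactly why Theorem~\ref{mat:sameconn} is indispensable: it allows one to slide from $\mfc$ to $\mc$ for the duality step and back to $\mfc$ on the $G^*$-side. Everything else should fall out formally: the self-duality of the rank-free $k$-connectivity is a short symmetry check from the definition, and the identification $\mc(G)^{*}=\mc(G^*)$ is the content of the duality theory already developed for the topological cycle matroid.
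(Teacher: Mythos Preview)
Your overall architecture is exactly that of the paper: pass from Tutte-connectivity to matroid connectivity via Theorem~\ref{mainthm} (more precisely Theorem~\ref{mat:TutteMat}), invoke the self-duality of the connectivity function (stated in the paper as Lemma~\ref{kapduallem}), and use the matroidal duality between the cycle matroids of $G$ and $G^*$ established in~\cite{matgraphs}.

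There is, however, a genuine error in your third step. The identity $\mc(G)^{*}=\mc(G^*)$ is \emph{false} in general. The cocircuits of $\mc(G)$ are precisely the \emph{finite} bonds of $G$ (the paper states that $\mc(G)$ is dual to the finite-bond matroid $\mfb(G)$), whereas the circuits of $\mc(G^*)$ are all topological cycles of $\itop{G^*}$, which under graph duality correspond to \emph{all} bonds of $G$, finite or infinite. In a $2$-connected finitely separable graph such as the double ladder there are infinite bonds (e.g.\ the set of all rungs), so the two matroids differ. The correct duality, recorded here as Theorem~\ref{graphdual}, is
\[
\mc(G)^{*}=\mfc(G^*).
\]
With this in hand your chain repairs itself and in fact shortens: $G$ is $k$-Tutte-connected $\Leftrightarrow$ $\mfc(G)$ is $k$-connected $\Leftrightarrow$ $\mc(G)$ is $k$-connected (Theorem~\ref{mat:sameconn}) $\Leftrightarrow$ $\mc(G)^{*}=\mfc(G^*)$ is $k$-connected (Lemma~\ref{kapduallem}) $\Leftrightarrow$ $G^*$ is $k$-Tutte-connected. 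You never need to reach $\mc(G^*)$ on the dual side, so the second application of Theorem~\ref{mat:sameconn} that you planned is unnecessary. This is precisely the route the paper takes, just with $G$ and $G^*$ interchanged in the application of Theorem~\ref{graphdual}.
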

We remark that this is not a new result. In~\cite{endduality}
we gave a graph-theoretical proof. Here, we will see a matroidal variant.

\section{Infinite cycles}

A graph is \emph{finitely separable} if any two vertices can be separated by
finitely many edges. Let us fix a finitely separable graph $G=(V,E)$ in this section.

A \emph{ray} of $G$ is a one-way infinite path. Two rays are \emph{edge-equivalent} if
for every finite set of edges $F$ there is a component of $G-F$ that contains
subrays of both rays. The equivalence classes of this relation are the 
\emph{edge-ends $\eends(G)$} of~$G$.

We view the edges of $G$ as disjoint homeomorphic images of the unit interval $[0,1]$, 
and define the quotient space $X_G$ by identifying these copies of $[0,1]$
at their common endvertices. Let us define a topological space $\itop G$ on 
$X_G\cup\eends(G)$ by specifying the basic open sets: These are all sets
of the form $\tilde C$, which consists of a topological component of $X_G-Z$
for some finite set $Z$ of inner points of edges together with all edge-ends
that have a ray lying entirely in $C$. We remark that normally this space will 
not be Hausdorff: No edge-end can be separated from a vertex that sends 
infinitely many edge-disjoint paths to one of its rays. However, 
and this is the reason for imposing finite separability, 
two vertices may always be topologically distinguished.
For a locally finite $G$, that is, a graph in which every vertex has finite degree, 
the space $\itop G$ coincides with the Freudenthal compactification.

\medskip

For us a \emph{cycle of $\itop G$} is  
 a homeomorphic image of the unit circle $S^1$ in $\itop G$. This definition of cycles
includes the traditional finite cycles but allows also other cycles, which then 
contain necessarily infinitely many vertices and edges. An \emph{arc} in $\itop G$
is the homeomorphic image of the unit interval $[0,1]$.
A \emph{standard subspace} of $\itop G$ is the closure of a subgraph of $G$ in $\itop G$.
The set of edges that are completely contained in a standard subspace $X$
are denoted by $E(X)$.
Cycles as well as arcs that have their endpoints in $V\cup\eends(G)$ are standard 
subspaces~\cite{Moritz}.  
A \emph{topological spanning tree} of $\itop G$ is a standard subspace that is path-connected in $\itop G$
and which contains every vertex of $G$ but no cycle. For more details see~\cite{matgraphs}.

\begin{figure}[ht]
\centering
\includegraphics[scale=1]{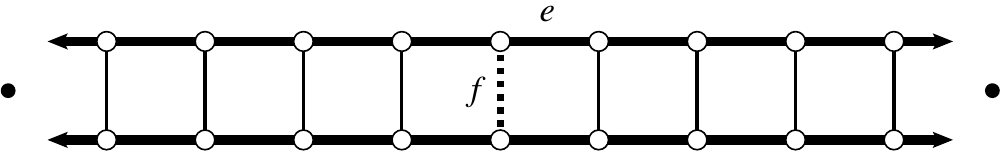}
\caption{An infinite cycle in the double ladder}\label{dladder}
\end{figure}

In Figure~\ref{dladder} some of the introduced concepts are illustrated. The graph there, the double 
ladder, has two edge-ends, one to the left and one to the right. The infinite cycle $C$ in bold lines 
goes through these two edge-ends. 
Moreover, while $C+f$ is a spanning tree of the graph it is not
(even including the two edge-ends) a topological spanning tree, simply because it contains
the infinite cycle $C$. On the other hand, $C-e$ can be seen to be one. Its connectivity
is ensured by the edge-ends. 

\section{Infinite matroids}

As finite matroids,
infinite matroids come with a number of different axiom systems. We only describe here the
\emph{independence axioms}. Let $E$ be a set, let $\I\subseteq 2^E$ be a set of subsets of $E$,
and denote by $\Imax$ the sets in $\I$ that are maximal under inclusion.
We say that $M=(E,\I)$ is a \emph{matroid} with independent sets $\I$ if 
the following axioms are satisfied:
\begin{itemize}
\item[\rm (I1)] $\emptyset\in\mathcal I$.
\item[\rm (I2)] $\I$ is closed under taking subsets, 
that is if $I\in\I$ and $J\subseteq I$ then $J\in \I$.
\item[\rm (I3)] For all $I\in\I\sm\Imax$ and $I'\in\Imax$ there is an $x\in I'\sm I$ such that $I\cup\{x\}\in\I$
\item[\rm (IM)] The set 
$\{\,I'\in\I: I\subseteq I'\subseteq X\,\}$ has a maximal element, 
whenever $I\subseteq X\subseteq E$ and $I\in\I$.
\end{itemize}
Infinite matroids show the same properties as finite matroids. In particular, 
they possess bases ($\subseteq$-maximal independent sets), 
circuits (minimal dependent sets) and a natural notion of duality, in much of the same
way as finite matroids,  see~\cite{infaxioms}. We will use the normal matroid 
terminology. For instance, for any subset $X$ of the ground set $E$
of a matroid $M$ we will write $M|X$ for the restriction of $M$ to $X$, and 
we write $M\mm X=M|(E\sm T)$ for the matroid obtained by deleting the elements in $X$
from $M$.

In~\cite{linkingthm}, the \emph{connectivity function} $\cf$  
is extended to infinite matroids. For any $X\subseteq E(M)$ in a matroid $M$,
choose a basis $B$ of $M|X$ and a basis $B'$ of $M\mm X$, and pick a 
set $F\subseteq B\cup B'$ so that $(B\cup B')\sm F$ is a basis of $M$.
Then we set 
$\cf_M(X):=|F|\in\mathbb N\cup\{\infty\}$ (we do not distinguish between
different infinite cardinalities). 
We remark that the value $\cf_M(X)$ is independent of the choice of the bases
and of the choice of $F$. Moreover, 
$F$ may be chosen to be a subset of $B$ or of $B'$, if necessary.
This definition of the connectivity function has similar properties 
as the traditional connectivity function of a finite matroid. 
For finite matroids, the two notions coincide. For more details and a proof
that $\cf$ is well-defined, see~\cite{linkingthm}.

We call
a partition $(X,Y)$ of $E$ a \emph{$\ell$-separation} if 
$\cf_M(X)\leq \ell-1$ and $|X|,|Y|\geq \ell$.
The matroid $M$ is \emph{$k$-connected} if there exists no $\ell$-separation
with $\ell<k$.

\medskip

Infinite graphs are a natural source of infinite matroids. 
Two dual matroids are normally associated with a finite graph,
the cycle matroid and the bond matroid. These matroids can be extended
verbatim to an infinite graph $G=(V,E)$, 
that we assume to be finitely separable. Let $\I$ be the set
of all edge sets $I\subseteq E$ not containing the edge set
of any finite cycle of $G$. Then $\I$ is the set of independent 
sets of a matroid $\mfc(G)$, the \emph{finite-cycle matroid of $G$}. 
Its circuits are precisely the edge sets of cycles, and its bases
coincide with the \emph{spanning forests}, the sets that form a spanning tree 
on every component. In a similar fashion, we may now define
a matroid whose circuits are the finite bonds, the \emph{finite-bond matroid $\mfb(G)$}. 
However, $\mfc(G)$ and $\mfb(G)$ are no longer dual. 
Rather the dual of $\mfb(G)$ is the \emph{cycle matroid $\mc(G)$}, 
whose circuits are precisely the edge sets of (finite or infinite) cycles of $\itop{G}$.\mymargin{do we need countable here?}
If $G$ is connected then the bases of $\mc(G)$ are the edge sets of topological spanning
trees of $\itop{G}$ and vice versa; see~\cite{matgraphs}.

\medskip

If the graph $G$ is infinite and $2$-connected then the two matroids $\mfc(G)$ and $\mc(G)$ will 
differ. As an illustration, consider again the double ladder in Figure~\ref{dladder}. The 
set of edges in bold will be independent in $\mfc(G)$ but not in $\mc(G)$.

\section{Matroid connectivity in infinite graphs}\label{sec:graphconn}

In a graph $G$, denote for $X\subseteq E(G)$
by $V[X]$ the set of
vertices that are incident with an edge in $X$.
Let $c(X)$
be the number of components of the subgraph~$(V[X],X)$ of $G$.

Our first aim is the following theorem:

\begin{theorem}
\label{mat:matconn}
Let $G$ be a $2$-connected finitely separable graph, and let
$X\subseteq E(G)$, and  $Y:=E(G)\sm X$.
Then the following statements hold:
\begin{enumerate}[\rm (i)]
\item $\cf_{\mfc(G)}(X)=\infty$ if and only $|V[X]\cap V[Y]|=\infty$; and
\item if $\cf_{\mfc(G)}(X)<\infty$ then 
\[
\cf_{\mfc(G)}(X)=|V[X]\cap V[Y]|-c(X)-c(Y)+1.
\]
\end{enumerate}
\end{theorem}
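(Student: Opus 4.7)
The plan is to exhibit a spanning tree $T$ of $G$ inside $B_X \cup B_Y$ for suitably chosen bases $B_X, B_Y$ and to count $F := (B_X \cup B_Y) \setminus T$ by means of a bipartite auxiliary graph $H$ built from the components of $G[X]$, $G[Y]$, and the shared vertices $W := V[X] \cap V[Y]$. Set $m := |W|$, $c_X := c(X)$, $c_Y := c(Y)$. Since $G$ is $2$-connected no vertex is isolated, so $V = V[X] \cup V[Y]$; and, assuming $X, Y \neq \emptyset$, every component of $G[X]$ meets $W$ (otherwise it would be a component of $G$ disjoint from $Y$, contradicting connectedness), whence $c_X, c_Y \leq m$.

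I would take $B_X$ to be a spanning forest of $G[X]$ and $B_Y$ a spanning forest of $G[Y]$; these are bases of $\mfc(G)|X$ and $\mfc(G)|Y$. The first key claim is that $(V, B_X \cup B_Y)$ is a connected spanning subgraph of $G$: any edge outside $B_X \cup B_Y$ lies in $X \setminus B_X$ or $Y \setminus B_Y$, and its endpoints are joined by a $B_X$- or $B_Y$-path within the relevant $X$- or $Y$-component, so every cut of $(V, B_X \cup B_Y)$ cuts $G$. Hence $B_X \cup B_Y$ contains a spanning tree $T$ of $G$, and by definition $\cf_{\mfc(G)}(X) = |F|$. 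Next, introduce the bipartite multigraph $H$ with vertex classes the components of $G[X]$ and of $G[Y]$, with one edge per $w \in W$ joining the $X$- and $Y$-components containing $w$: then $|V(H)| = c_X + c_Y$ and $|E(H)| = m$, and $H$ is connected because every $G$-path decomposes into alternating $X$- and $Y$-subpaths with transitions at $W$-vertices, tracing an $H$-walk.

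To count $|F|$ I would build $T$ greedily, starting from $B_X$ and processing the $Y$-components $D_j$ one at a time: for each $j$, add the edges of $B_Y \cap E(D_j)$ in some order, skipping precisely those whose endpoints already lie in the same component of the current forest. A component-count, in which $D_j$ reduces the number of components by $\ell_j' + |V(D_j) \setminus W| - 1$ (where $\ell_j'$ is the number of current components meeting $V(D_j) \cap W$ at the moment $D_j$ is processed) and the total reduction equals $c_X + |V \setminus V[X]| - 1$, gives $\sum_j \ell_j' = c_X + c_Y - 1$. Consequently
\[
|F| = \sum_j \bigl(|V(D_j) \cap W| - \ell_j'\bigr) = m - (c_X + c_Y - 1),
\]
which yields (ii) when $m$ is finite.

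The main obstacle is case (i): when both $m$ and $c_X + c_Y$ are infinite, the displayed identity is only a formal equality of cardinals and does not by itself pin down $|F|$. To handle $m = \infty$ directly, I would apply a pigeonhole to the pairs $(C_w, D_w)$ of $X$- and $Y$-components containing $w$ as $w$ ranges over $W$: either some pair $(C, D)$ shares infinitely many $W$-vertices, in which case the finite $B_X$-path in $C$ together with the finite $B_Y$-path in $D$ between each pair of shared $W$-vertices produces an infinite family of edge-disjoint finite cycles of $B_X \cup B_Y$ (forcing infinitely many skipped edges), or the number of distinct pairs is infinite, from which an infinite family of independent cycles of $H$ can be extracted and lifted to independent finite cycles of $B_X \cup B_Y$ via the $2$-connectedness of $G$. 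Either way $|F| = \infty$, while conversely $m < \infty$ gives finite $|F|$ via the identity above, completing (i).
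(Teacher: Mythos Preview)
Your approach to part~(ii) via the bipartite auxiliary graph $H$ is correct and genuinely different from the paper's induction on $c(X)+c(Y)$: the quantity $|F|$ is precisely the cycle rank $m-c_X-c_Y+1$ of the connected graph $H$. One small repair is needed: your derivation of $\sum_j \ell_j' = c_X+c_Y-1$ literally cancels the possibly infinite quantity $|V\setminus V[X]|$ against itself. The clean fix is to run the greedy count in the \emph{finite} graph $H$ rather than in $G$, observing that the number of $B_Y$-edges skipped while processing $D_j$ equals the number of $H$-edges at $D_j$ that close an $H$-cycle.

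The genuine gap is in the hard direction of~(i), showing $\kappa=\infty$ when $m=\infty$. In your Case~1 you assert that the tree-paths in $B_X\cap E(C)$ and $B_Y\cap E(D)$ between ``each pair'' of shared $W$-vertices yield edge-disjoint cycles; but tree-paths between overlapping pairs of vertices share edges, and in fact the cycles for the three pairs among $w_1,w_2,w_3$ are even linearly dependent. (This case \emph{is} salvageable: given a finite $F$, pigeonhole infinitely many shared $W$-vertices first into one component of $T_C\setminus F$, then into one component of $T_D\setminus F$, and any two of them give a cycle avoiding $F$.) More seriously, Case~2 does not work as written. Two edge-disjoint $H$-cycles may pass through a common $H$-vertex $C$, and their lifts then share edges of the single tree $B_X\cap E(C)$, so independence in $H$ does not transfer to $B_X\cup B_Y$; moreover all $H$-cycles may run through a small set of $H$-vertices (take $c_X=1$), so one cannot simply avoid the components touched by a putative finite $F$. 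The paper handles this direction by a completely different route: it first produces infinitely many edge-disjoint finite cycles in $G$ meeting both $X$ and $Y$ (Lemma~\ref{manycycles}), then uses finite separability (Lemma~\ref{mat:disjcycles}) to make them vertex-disjoint outside a single vertex, and only \emph{afterwards} builds bases $T_X,T_Y$ containing the resulting independent sets $I_X,I_Y$. Your sketch never invokes finite separability in this direction, and closing it would require an argument of comparable strength.
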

Statement~(ii) is exactly as for finite graphs when the traditional connectivity
function is used, see Tutte~\cite{tutte66}. We shall need two lemmas for the
proof of Theorem~\ref{mat:matconn}.

\begin{lemma}
\label{mat:disjcycles}
Let $G$ be a finitely separable graph, and let $\mathcal D$
be an infinite set of edge-disjoint finite cycles.
Then there exists an infinite subset $\mathcal D'$ of $\mathcal D$
and a vertex $v$ of $G$ so that any two distinct cycles in
$\mathcal D'$ are disjoint outside~$v$.
\end{lemma}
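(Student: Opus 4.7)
The plan is to split into two cases according to whether some vertex of $G$ lies on infinitely many cycles of $\mathcal{D}$, and in each case to build the desired $\mathcal{D}'$ by a greedy construction. The only place finite separability really enters is in ruling out one potential obstruction in the greedy step.

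If every vertex lies on only finitely many cycles of $\mathcal{D}$, I would greedily build an infinite pairwise vertex-disjoint subfamily: having chosen $C_1,\ldots,C_n$, the set $V(C_1)\cup\cdots\cup V(C_n)$ is finite and each of its vertices meets only finitely many cycles of $\mathcal{D}$, so all but finitely many remaining cycles are vertex-disjoint from the chosen ones; pick any such as $C_{n+1}$ and continue. The resulting infinite $\mathcal{D}'$ works with any vertex~$v$.

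Otherwise, fix a vertex $v$ and an infinite subfamily $\mathcal{D}_1\subseteq \mathcal{D}$ every cycle of which contains $v$. I would then greedily construct $\mathcal{D}'=\{C_1,C_2,\ldots\}\subseteq\mathcal{D}_1$ with $V(C_i)\cap V(C_j)=\{v\}$ for all $i\neq j$. Assume $C_1,\ldots,C_n$ have been chosen, and set $U_n=(V(C_1)\cup\cdots\cup V(C_n))\setminus\{v\}$, a finite set. If infinitely many cycles of $\mathcal{D}_1$ are vertex-disjoint from $U_n$, take one as $C_{n+1}$ and continue.

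The main obstacle is the opposite case: only finitely many cycles of $\mathcal{D}_1$ avoid $U_n$, so infinitely many meet $U_n$. Pigeonhole on the finite set $U_n$ then yields a vertex $u\in U_n$ lying on infinitely many cycles of $\mathcal{D}_1$. Each such cycle splits at $v$ and $u$ into two internally disjoint $v$--$u$ paths, and since the cycles of $\mathcal{D}$ are pairwise edge-disjoint, this would produce infinitely many pairwise edge-disjoint $v$--$u$ paths in $G$. Finite separability, however, furnishes a finite edge set separating $v$ from $u$, which must meet every $v$--$u$ path---an impossibility for infinitely many edge-disjoint ones. Hence this case never arises, the greedy construction never halts, and it produces the desired infinite $\mathcal{D}'$.
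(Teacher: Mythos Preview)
Your argument is correct. Both your proof and the paper's hinge on the same single use of finite separability: if two distinct vertices each lay on infinitely many cycles of an edge-disjoint family, one would obtain infinitely many edge-disjoint paths between them, which a finite separating edge set rules out.

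The organisation, however, is genuinely different. The paper does not split into cases. Instead it fixes an enumeration $C_1,C_2,\ldots$ and runs a filtering procedure: at step $i$ it visits each vertex $w$ of $C_i$ and, unless $w$ lies in all but finitely many of the surviving cycles, discards every surviving cycle through $w$. After this global pruning it argues that in the resulting family any vertex shared by two cycles must in fact be shared by cofinitely many, and then invokes finite separability once to show there can be at most one such vertex. Your version instead decides up front whether some vertex lies on infinitely many cycles, restricts to that subfamily if so, and then builds $\mathcal D'$ greedily, invoking finite separability inside the inductive step to guarantee the construction never stalls. Your route is a little more elementary and transparent; the paper's filtering argument is slicker in that it handles both of your cases uniformly and uses finite separability only once at the end.
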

\begin{proof}
Let $C_1,C_2,\ldots$ be an enumeration of (countably many of) the cycles in $\mathcal D$.
Inductively we will delete certain cycles from $\mathcal D$ while ensuring in each step
that we keep infinitely many cycles. In step $i$, assuming $C_i$
has not been deleted, we go through the finitely many vertices
of $C_i$, one by one. Then for a vertex $w$ of $C_i$, 
unless $w$ lies in all but finitely many of 
 the remaining $C_j$, we delete from $\mathcal D$
all those  $C_j$ that contain $w$. If $w$ lies in all but finitely
many of the remaining $C_j$ we skip to the next vertex of $C_i$ without
deleting any cycles. Denote the resulting infinite subset of $\mathcal D'$ by
$\mathcal D$. 

Now, if the cycles in $\mathcal D'$ are pairwise disjoint, choose
any vertex of $G$ for $v$ and observe that this choice of $\mathcal D'$ and $v$
is as desired. So, assume that there is a vertex $v$ shared by two cycles in 
$\mathcal D'$. Pick the smallest index $i$ for which there is a $j\neq i$
so that $C_i$ and $C_j$ both contain~$v$
and so that $C_i,C_j\in\mathcal D'$.
Note that $v$, as well as any other
vertex that lies in two cycles of $\mathcal D'$, is contained in infinitely 
many cycles in $\mathcal D'$; otherwise we would have deleted all but
one of those cycles incident with $v$. 

Suppose there exists a second vertex $w$ contained in two cycles of $\mathcal D'$.
If $k$ is the lowest index with $w\in V(C_k)$ and $C_k\in\mathcal D'$ then why
have we not deleted all those cycles $C_l$ containing $w$ with $l>k$
from $\mathcal D'$ in step $k$? Precisely because all but finitely many
of the cycles in $\mathcal D'$ contain $w$. In particular, infinitely many
of those  cycles in $\mathcal D'$ that contain $v$ must also contain $w$.
By picking a $v$--$w$ path in each of those cycles we obtain infinitely
many edge-disjoint $v$--$w$ paths, which is impossible 
in a finitely separable graph.
\end{proof}

The following lemma is a straightforward combination of Lemmas~4.1 and~4.2
in~\cite{endduality}:
%%%%%%%%%%%%%%%%%%%%%%%%%%%%%%%%%% COMMENT %%%%%%%%%%%%%%%%%%%%%%%%%%%%%%%%
\comment{
\begin{lemma}\selfcite{endduality}
\label{lem:2cols}
Let $G$ be a $2$-connected graph, and let $X$ and $Y$ be two 
sets of edges such that 
$\overline{X}$ and $\overline{Y}$ contain a common end of $G$. Then
there are infinitely many edge-disjoint finite cycles
each of which meets both $X$ and $Y$.
\end{lemma}

\begin{lemma}\selfcite{endduality}
\label{lem:end}
Let $G$ be a $2$-connected finitely separable graph.
If $U$ is an infinite set of vertices then 
$\overline U$ contains an end of $G$.
\end{lemma}

For each vertex in $V[X]\cap V[Y]$ pick one incident edge in $X$
and one in $Y$; denote the set of these edges by $X'\subseteq X$
and $Y'\subseteq Y$, respectively. 
Lemma~\ref{lem:end} yields
an end $\omega\in\overline{X'}$ (where the closure is taken in 
$|G|$). 
It is easy to check that then also
$\omega\in\overline{Y'}$ (again with respect to $|G|$). 

By Lemma~\ref{lem:2cols}, there exists an
infinite set $\mathcal D$ of edge-disjoint finite 
cycles each of which meets both $X'$ and $Y'$. 
}
%%%%%%%%%%%%%%%%%%%%%% END OF COMMENT %%%%%%%%%%%%%%%%%%%%%%%%%%%%%%%%%%%%%%

\begin{lemma}\label{manycycles}
Let $G$ be a $2$-connected finitely separable graph, and let $X',Y'$ be edge sets of $G$
so that there are infinitely many vertices that are incident with both an edge in $X'$ 
and an edge in $Y'$. Then there are infinitely many edge-disjoint finite
cycles in $G$, each of which contains an edge of $X'$ and of $Y'$.
\end{lemma}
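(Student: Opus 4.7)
The plan is to reduce the lemma to the two results from \cite{endduality} alluded to in the preceding paragraph, namely: (a) in a $2$-connected finitely separable graph, the closure (in $\itop G$) of any infinite vertex set contains an edge-end; and (b) if two edge sets $X$ and $Y$ have a common edge-end in their closures, then there are infinitely many edge-disjoint finite cycles meeting both.

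First, I would thin out $X'$ and $Y'$ so that each vertex of the given infinite set $U$ contributes exactly one edge to each side. For every $u\in U$ pick one incident edge $e_u\in X'$ and one incident edge $f_u\in Y'$, and set $X'':=\{e_u:u\in U\}\subseteq X'$ and $Y'':=\{f_u:u\in U\}\subseteq Y'$. Any cycle meeting $X''$ and $Y''$ automatically meets $X'$ and $Y'$, so it suffices to work with the smaller sets.

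Next, by (a) there is an edge-end $\omega\in\overline U$, where closures are taken in $\itop G$. The key step is to check that $\omega$ also lies in $\overline{X''}$ and $\overline{Y''}$. Let $\tilde C$ be a basic open neighborhood of $\omega$, coming from a finite set $Z$ of inner edge points and a component $C$ of $X_G-Z$. Since $\omega\in\overline U$, all but finitely many $u\in U$ lie in $\tilde C$. For each such $u$, its chosen edge $e_u$ lies entirely in $\tilde C$ unless $e_u$ passes through one of the finitely many edges touched by $Z$; as $Z$ is finite, this can happen for only finitely many $u$. Hence $\tilde C\cap X''$ is nonempty, and since $\tilde C$ was arbitrary, $\omega\in\overline{X''}$. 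The same argument gives $\omega\in\overline{Y''}$.

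With both closures sharing the edge-end $\omega$, we apply (b) to $X''$ and $Y''$ to obtain infinitely many edge-disjoint finite cycles, each meeting both $X''\subseteq X'$ and $Y''\subseteq Y'$, which is what the lemma asks for. The only nontrivial step is the verification that $\omega\in\overline{X''}\cap\overline{Y''}$; the possible subtlety there is that an edge at $u\in U\cap\tilde C$ might exit $\tilde C$ through the separator $Z$, but the finiteness of $Z$ against the infinity of $U$ disposes of this issue immediately.
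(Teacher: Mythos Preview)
Your approach is exactly the one the paper intends: it states the lemma as ``a straightforward combination of Lemmas~4.1 and~4.2 in~\cite{endduality}'', which are precisely your (a) and (b), and the bridge between them is the verification that an edge-end in $\overline U$ also lies in the closures of the two edge sets.

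One step in your verification is wrong as written. From $\omega\in\overline U$ you may conclude only that every basic open set $\tilde C\ni\omega$ meets $U$, not that it contains all but finitely many points of $U$; if $U$ accumulates at two distinct edge-ends, a neighbourhood of one will miss infinitely many points of $U$. Fortunately the desired conclusion survives, and more simply than you argue: each $u\in U$ is an endpoint of $e_u\in X''$ and hence itself a point of the closed edge $e_u\subseteq\itop G$, so $U\subseteq\bigcup X''$ as point sets, whence $\omega\in\overline U\subseteq\overline{X''}$. The same gives $\omega\in\overline{Y''}$. Your finiteness-of-$Z$ manoeuvre is therefore unnecessary (and, for the same reason, so is the preliminary thinning to $X'',Y''$).
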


We now prove a first part of Theorem~\ref{mat:matconn}.
\begin{lemma}
\label{mat:infint}
Let $G$ be a $2$-connected finitely separable graph, and let
$X\subseteq E(G)$, and  $Y:=E(G)\sm X$. If $|V[X]\cap V[Y]|=\infty$
then $\cf_{\mfc(G)}(X)=\cf_{\mc(G)}(X)=\infty$.
\end{lemma}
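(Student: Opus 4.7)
The plan is to exhibit a basis $B_X$ of $\mfc(G)|X$ and a basis $B_Y$ of $\mfc(G)|Y$ such that $B_X \cup B_Y$ contains infinitely many pairwise edge-disjoint finite cycles of $G$. Each such cycle is a circuit of $\mfc(G)$ (and of $\mc(G)$), so any set $F \subseteq B_X \cup B_Y$ for which $(B_X \cup B_Y) \sm F$ is a basis of $\mfc(G)$ has to meet every one of these circuits; since they are pairwise edge-disjoint, $F$ cannot be finite, yielding $\cf_{\mfc(G)}(X) = \infty$.

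Concretely, I would first apply Lemma~\ref{manycycles} to produce an infinite family of edge-disjoint finite cycles in $G$, each containing an edge of $X$ and an edge of $Y$. Feeding this family into Lemma~\ref{mat:disjcycles} yields an infinite subfamily $C_1, C_2, \ldots$ together with a vertex $v$ so that any two distinct $C_i$ share at most the vertex $v$. For each $i$, fix $x_i \in E(C_i) \cap X$ and $y_i \in E(C_i) \cap Y$.

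The main technical point is that $T_X := \bigcup_i (E(C_i) \sm \{y_i\})$ is a forest. Indeed, each $C_i \sm \{y_i\}$ is a finite path, and since distinct $C_i$ share only $v$, any hypothetical cycle in $T_X$ would have to traverse two of these paths and thus pass through $v$ at least twice, contradicting that a cycle meets each vertex exactly once. Hence $\bigcup_i (E(C_i) \cap X) \subseteq T_X$ is independent in $\mfc(G)$, and by (IM) extends to a basis $B_X$ of $\mfc(G)|X$. Symmetrically, $\bigcup_i (E(C_i) \cap Y)$ is independent and extends to a basis $B_Y$ of $\mfc(G)|Y$. By construction $E(C_i) \subseteq B_X \cup B_Y$ for every $i$, supplying the required family of pairwise edge-disjoint circuits.

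For the cycle matroid the same bases work: the subgraphs $\bigcup_i (E(C_i) \cap X)$ and $\bigcup_i (E(C_i) \cap Y)$ consist of finite paths only and therefore contain no ray, so their closures in $\itop{G}$ add no edge-ends and in particular contain no infinite cycle; they remain independent in $\mc(G)|X$ and $\mc(G)|Y$ respectively. Each $E(C_i)$ is still a circuit of $\mc(G)$, and the same counting argument delivers $\cf_{\mc(G)}(X) = \infty$. The real obstacle in this plan is precisely the forest claim for $T_X$ (and its $Y$-counterpart): without the fine control over pairwise vertex intersections supplied by Lemma~\ref{mat:disjcycles}, edge-disjoint cycles could meet in many vertices and combine into unwanted extra cycles that would wreck the independence of our candidate bases.
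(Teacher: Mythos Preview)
Your argument is correct and follows essentially the same route as the paper: apply Lemma~\ref{manycycles} and then Lemma~\ref{mat:disjcycles} to obtain cycles that pairwise meet only in a single vertex~$v$, observe that the $X$-parts and $Y$-parts of these cycles are forests containing no ray (hence independent in both $\mfc(G)$ and $\mc(G)$), extend to bases, and note that any $F$ reducing $B_X\cup B_Y$ to a basis must hit each of the infinitely many edge-disjoint circuits $E(C_i)$. The only cosmetic difference is that the paper works directly with $I_X=X\cap\bigcup_i E(C_i)$ rather than your auxiliary superset $T_X=\bigcup_i(E(C_i)\sm\{y_i\})$, and your phrase ``the same bases work'' for $\mc(G)$ should be read as ``the same independent sets work, extended to bases of $\mc(G)|X$ and $\mc(G)|Y$''.
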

\begin{proof}
For each vertex in $V[X]\cap V[Y]$ pick one incident edge in $X$
and one in $Y$; denote the set of these edges by $X'\subseteq X$
and $Y'\subseteq Y$, respectively. Applying Lemma~\ref{manycycles}
in conjunction with Lemma~\ref{mat:disjcycles} 
we obtain 
a vertex $v$ and an infinite set $\mathcal D$ 
of finite cycles, each of which contains an edge of $X$ and of $Y$, 
and so that any two cycles either meet
only in $v$ or not at all. 
As no cycle in $\mathcal D$ has its edge set entirely in $X$ or 
entirely in $Y$ it follows
that neither  $I_X:=X\cap \bigcup_{C\in\mathcal D}E(C)$
nor $I_Y:=Y\cap \bigcup_{C\in\mathcal D}E(C)$ contains the
edge set of a finite 
cycle of $G$. To see that also neither contains
the edge set of an infinite cycle, observe that  
each of the graphs $(V[I_X],I_X)-v$ and $(V[I_Y],I_Y)-v$ 
is the union of (vertex-)disjoint (finite) paths, and therefore
none contains a ray.

Thus $I_X$ and $I_Y$ are independent in both matroids 
$\mfc(G)$ and $\mc(G)$. Let $T_X$
be a basis of $M|X$ containing $I_X$, and let $T_Y\supseteq I_Y$
be a basis of $M|Y$, where $M$ is either $\mfc(G)$ or $\mc(G)$.
Choose $F\subseteq T_X\cup T_Y$ so that $(T_X\cup T_Y)\sm F$ is a basis of $M$. 
Since $I_X\cup I_Y$ contains
the (edge-)disjoint circuits $E(C)$, $C\in\mathcal D$,
$F$ must contain at least one edge from each of those infinitely many circuits. 
Hence $\cf_M(X)=|F|=\infty$.
\end{proof}

\begin{proof}[Proof of Theorem~\ref{mat:matconn}]
(i) By Lemma~\ref{mat:infint} we only need to consider the case when
$|V[X]\cap V[Y]|<\infty$. Pick a basis $T_X$ 
 of $\mfc(G)|X$, and let $T_Y$
be a basis of $\mfc(G)|Y$.
Because $G$ is finitely separable, there is 
a finite set of edges separating $u$ from $v$ in $(V[X],X)$, 
for every of the finitely many pairs
of vertices $u,v\in V[X]\cap V[Y]$. 
Denote by $F$ the union of all those edges, and observe that $F$ is 
a finite edge set. By the choice of $F$ the set $(T_X\cup T_Y)\sm F$
cannot contain any finite circuit, and is thus independent in $ \mfc(G)$.
As $|F|<\infty$ is therefore an upper bound for $\cf_{\mfc(G)}(X)$ the 
result follows.

(ii) 
Pick a spanning tree on every component of $(V[X],X)$ and denote 
the union of their edge sets by $T_X$. We define $T_Y$ for $(V[Y],Y)$
in a similar way. Choose a set of edges $F\subseteq X$ so that $(T_X\cup T_Y)\sm F$
is a basis of $\mfc(G)$, i.e.\ the edge set of a spanning tree of $G$.

We claim that
\begin{equation}
\label{ma:eq:onecomp}
\emtext{if $c(X)=c(Y)=1$ then $\cf_{\mfc(G)}(X,Y)=|V[X]\cap V[Y]|-1$.}
\end{equation}
Let us prove the claim. Each vertex of $U:=V[X]\cap V[Y]$ must lie
in a distinct component of $(V[T_X],T_X\sm F)$ since otherwise there exists a
path in $(V[T_X],T_X\sm F)$ that starts and ends in $U$ but is otherwise disjoint from $U$.
This path can be extended with edges in $T_Y$
to a finite cycle that still misses $F$, which is impossible as $(T_X\cup T_Y)\sm F$ is the edge set of a tree. 
As $(V[T_X],T_X)$ is connected
and as each deletion of a single edge increases the number of components
by exactly one, we obtain $|F|\geq |U|-1$. 
Suppose, on the other hand, that $|F|> |U|-1$. Then there exists a
component  of $(V[T_X],T_X)\sm F$ that contains no vertex of $U$. Pick 
an edge $e\in F$ with one of its endvertices in this component. 
Setting $T:=(T_X\sm F)\cup T_Y$, we observe that 
$\{e\}$ is a cut of $(V[T],T+e)$.
However, as
$T$ is (the edge set of) a spanning tree of $G$, there has to be 
a cycle in $T+e$ containing $e$, a contradiction. This 
proves~\eqref{ma:eq:onecomp}.

We now proceed by induction on $c(X)+c(Y)$, which is indeed a finite
number as $|V[X]\cap V[Y]|$ is an upper bound for both $c(X)$ and $c(Y)$.
Since the induction start is established by~\eqref{ma:eq:onecomp},
we may assume that $(V[X],X)$ has two components $K$ and $K'$. 
Insert a new edge $f$ between $K$ and $K'$, and set $G':=G+f$
and $X':=X\cup\{f\}$. Clearly, $(X',Y)$ is a partition of $E(G')$.
Since $c(X')=c(X)-1$, the induction yields
\[
\cf_{\mfc(G')}(X',Y)
=|V[X]\cap V[Y]|-(c(X)-1)-c(Y)+1.
\]

We shall now show that $\cf_{\mfc(G')}(X',Y)=\cf_{\mfc(G)}(X,Y)+1$. 
Observe that then $T_X+f$ is (the edge set of)
a maximal spanning forest of $(V[X'],X')\subseteq G'$. Moreover,
$(T_X\sm F)\cup T_Y=((T_X+f)\sm (F\cup\{f\}))\cup T_Y$ is a spanning
tree of $G'$, too. Thus
\[
\cf_{\mfc(G')}(X,Y')=|F\cup\{f\}|=|F|+1=\cf_{\mfc(G)}(X,Y)+1,
\]
which finishes the proof.
\end{proof}

Next, let us show that the connectivity functions of $\mfc(G)$
and $\mc(G)$ coincide. For this, we should be able to modify
the proof of Theorem~\ref{mat:matconn} in order to make it work for 
$\mc(G)$, too. Rather then repeating the argument we will
pursue a different approach, for which we will need a small lemma
and a result from~\cite{duality}.

\begin{lemma}
\label{mat:subtop}
Let $G$ be a finitely separable graph, and let $H$ be 
an induced subgraph of $G$ so that $N(G-H)$ is a finite set.
Then every cycle $C\subseteq H$ of $\itop{G}$ contains a cycle of $\itop H$.
\end{lemma}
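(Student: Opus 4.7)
The plan is to show that the cycle $C$, after substituting each edge-end of $G$ visited by $C$ with a suitable edge-end of $H$, yields a cycle of $\itop{H}$ with the same edge set. The finite-cycle case is immediate, since $H$ is induced implies any finite cycle with $E(C)\subseteq E(H)$ is a finite cycle of $H$; all the work thus concerns the infinite case and will rest on the following auxiliary claim: if two rays $R_1,R_2$ in $H$ are edge-equivalent in $G$, they are already edge-equivalent in $H$.

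To prove this auxiliary claim I would argue by contradiction. Suppose some finite $F'\subseteq E(H)$ separates the tails of $R_1,R_2$ in $H$. For every ordered pair $(n,n')$ of vertices in $N:=N(G-H)$ lying in distinct components of $H-F'$, finite separability of $G$ provides a finite edge set $S_{n,n'}\subseteq E(G)$ separating $n$ from $n'$ in $G$. Since $N$ is finite, $F:=F'\cup\bigcup_{(n,n')} S_{n,n'}$ is finite. A walk in $G-F$ between tails of $R_1$ and $R_2$ then decomposes into maximal $H$-segments, each confined to a single component of $H-F'$, interleaved with excursions through $G-H$. As the walk connects distinct $H-F'$-components, at least one excursion joins two $N$-vertices in different components, which is blocked by the corresponding $S_{n,n'}\subseteq F$, contradicting the assumed $G$-edge-equivalence of $R_1$ and $R_2$.

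Given this claim, each edge-end $\omega$ of $G$ visited by $C$ has a uniquely determined counterpart $\omega_H\in\itop{H}$: every ray of $\omega$ on $C$ lies in $H$ (because $E(C)\subseteq E(H)$), and all such rays are $H$-edge-equivalent to one another. Define $\iota\colon C\to\itop{H}$ to be the identity on vertices and inner points of edges and to send each such $\omega$ to $\omega_H$. Injectivity holds because $H$-edge-equivalence implies $G$-edge-equivalence, so distinct edge-ends of $G$ in $C$ induce distinct edge-ends of $H$. Continuity at $\omega$ is verified by refining a basic neighborhood of $\omega_H$ in $\itop{H}$, given by a finite set $Z\subseteq X_H$ of inner edge-points, to a basic neighborhood of $\omega$ in $\itop{G}$: using finite separability, enlarge $Z$ by finitely many further inner points so that each of the finitely many $n\in N$ is separated in $G$ from a vertex of a ray of $\omega$ in $C$. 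The resulting component of $X_G$ around that ray then avoids $N$, hence all of $V(G-H)$, so lies in the prescribed component of $X_H$.

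It remains to see that $\iota(C)$ is Hausdorff as a subspace of $\itop{H}$: the only possible failure would be a pair consisting of a vertex $v\in V(C)$ and an edge-end $\omega_H\in\iota(C)$ joined by infinitely many edge-disjoint $v$-to-ray-of-$\omega_H$ paths in $H$; but those paths live equally in $G$, making $v$ and $\iota^{-1}(\omega_H)$ topologically indistinguishable in $\itop{G}$ and contradicting that $C\cong S^1$ is itself Hausdorff as a subspace of $\itop{G}$. Consequently, $\iota\colon C\to\iota(C)$ is a continuous bijection from a compact onto a Hausdorff space, hence a homeomorphism, and $\iota(C)$ is a cycle of $\itop{H}$ whose edge set equals $E(C)$. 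The subtlest step is expected to be the continuity of $\iota$ at edge-ends, where the finiteness of $N$ and the finite separability of $G$ must combine to retract the relevant neighborhoods cleanly into $X_H$.
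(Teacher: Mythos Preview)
Your approach is genuinely different from the paper's and aims for a stronger conclusion. The paper never touches the topology of $C$ directly: it invokes the Diestel--K\"uhn cut criterion (Theorem~\ref{cutcriterion}) to argue that if $E(C)$ were not an edge-disjoint union of cycles of $\itop H$, some finite cut $F$ of $H$ would meet $E(C)$ oddly; using finite separability and the finiteness of $N(G-H)$ one extends $F$ by a finite $F'\subseteq E(G)\setminus E(H)$ to a finite cut of $G$ still meeting $E(C)$ oddly, contradicting the cut criterion in $\itop G$. This yields only that $E(C)$ \emph{contains} the edge set of some cycle of $\itop H$, in a five-line argument. You, by contrast, attempt to show that $C$ \emph{is} (after relabelling ends) a single cycle of $\itop H$, which is stronger and arguably more informative, at the cost of considerably more topological work.

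Your auxiliary claim and the injectivity and Hausdorff steps are sound. There is, however, a genuine gap: you repeatedly use ``a ray of $\omega$ on $C$'' (to define $\iota(\omega)$ and in the continuity argument) without ever justifying that such a ray exists. In the locally finite case this is standard, but for general finitely separable graphs it is not automatic that an arc through an edge-end contains a ray of that end. What you actually need, and what does follow from your hypotheses, is that every $\omega\in C\cap\eends(G)$ has a ray in $H$: any ray of $\omega$ meets the finite set $N$ only finitely often, so has a tail in $H$ or in $G-H$; in the latter case one shows (using that $C$ is closed as a standard subspace and that $C\cong S^1$ is $T_1$) that every $n\in N$ can be separated from $\omega$ by a finite edge set, whence a suitable basic neighbourhood of $\omega$ lies in $G-H$, contradicting $\omega\in C\subseteq\overline H$. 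The same separation fact is what makes your continuity step go through, and it too deserves an explicit argument rather than the parenthetical ``using finite separability, enlarge $Z$\ldots''. Once these points are supplied, your route is valid; but note that the paper's cut-criterion proof sidesteps all of this.
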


To prove the lemma, we use a theorem that is the direct consequence
of Theorems~6.3 and~6.5\mymargin{look this up!} of  Diestel and K\"uhn~\cite{tst}: 
\begin{theorem}[Diestel and K\"uhn~\cite{tst}]
\label{cutcriterion}
Let $Z$ be a set of edges in a finitely separable graph $G$.
Then $Z$ is the edge set of an edge-disjoint union of cycles of $\itop G$ 
if and only if $Z$ meets every finite cut of $G$ in an even number of edges.
\end{theorem}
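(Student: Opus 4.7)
The plan is to treat the two directions separately, with sufficiency providing the real difficulty.

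\textbf{Necessity.} I would let $C$ be a cycle of $\itop G$ and $F$ a finite cut with vertex bipartition $(V_1,V_2)$. Parametrising $C$ by a homeomorphism $\phi:S^1\to C$ and removing the $\phi$-preimages of the interior points of the edges in $E(C)\cap F$ decomposes $S^1$ into finitely many closed sub-arcs. Each such sub-arc is mapped into the closure in $\itop G$ of one of the two sides, since its image is connected and avoids the interiors of the cut edges. Traversing $S^1$, consecutive sub-arcs must alternate between the two sides of the cut, so $|E(C)\cap F|$ is even.

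\textbf{Sufficiency.} Suppose $Z$ meets every finite cut of $G$ evenly. Zorn's lemma yields a maximal collection $\mathcal D$ of edge-disjoint cycles of $\itop G$ whose edge sets lie in $Z$. By the necessity direction, each $E(C)$ itself meets every finite cut evenly, so $Z':=Z\sm\bigcup_{C\in\mathcal D}E(C)$ still does. It therefore suffices to show that every nonempty such $Z'$ contains the edge set of some cycle of $\itop G$, as this would contradict maximality of $\mathcal D$ and force $Z'=\es$. For this, I would pick an edge $e=uv\in Z'$ and seek a $u$--$v$ arc in $\itop G$ built from $Z'\sm\{e\}$. Exhausting $G$ by finite connected subgraphs $G_1\subseteq G_2\subseteq\cdots$ containing $e$ and contracting each component of $G\sm G_n$ to a vertex, I would observe that the image of $Z'$ still meets every cut evenly in each finite minor (finite cuts of the minor pull back to finite cuts of $G$), and so, by the classical finite Euler-type result, decomposes there into edge-disjoint finite cycles. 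This yields a finite $u$--$v$ path in $Z'\sm\{e\}$ modulo the contraction. Applying K\"onig's infinity lemma over these choices produces a coherent limit which, unfolded through the contractions, is either a finite $u$--$v$ path or a double ray in $Z'\sm\{e\}$ whose two tails converge to edge-ends.

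\textbf{Main obstacle.} The hard part is turning this combinatorial limit into a genuine topological arc of $\itop G$. The two tails could a priori run to different edge-ends; but then a finite cut separating those edge-ends would be met an odd number of times by the path, contradicting the cut condition. Finite separability then ensures that each tail actually converges to a single edge-end and that the resulting point set carries the correct subspace topology in $\itop G$ to form a homeomorphic image of $[0,1]$. Verifying this cleanly — extracting a genuine arc, rather than merely an even-degree edge set with the right combinatorics — is exactly what distinguishes the infinite case from the finite one and is where the argument of Diestel and K\"uhn~\cite{tst} concentrates its work.
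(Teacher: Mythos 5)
You should first know that the paper contains no proof of Theorem~\ref{cutcriterion} to compare against: it is imported verbatim as a direct consequence of Theorems~6.3 and~6.5 of Diestel and K\"uhn~\cite{tst}, so your attempt has to stand on its own. Your necessity direction is essentially sound: since $F$ is finite, every ray crosses it only finitely often and so has a tail on one side; hence every edge-end belongs to exactly one side, the closures of the two sides are disjoint closed sets covering $\itop G$ minus the interiors of the cut edges, and the alternation count around $S^1$ gives evenness for a single cycle (you should additionally invoke the standard fact that a circle in $\itop G$ containing an inner point of an edge contains the whole edge, so your sub-arcs genuinely avoid all cut-edge interiors; the extension from one cycle to an edge-disjoint union is then immediate because only finitely many of the cycles can meet the finite cut $F$). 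Your Zorn reduction in the sufficiency direction, and the parity bookkeeping showing the leftover set $Z'$ still meets every finite cut evenly, are likewise fine.

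The genuine gap is the K\"onig's lemma step and what follows it. First, compatibility is unverified: a cycle decomposition of the image of $Z'$ in the minor at level $n+1$ need not project to one at level $n$, so the tree of choices you want K\"onig's lemma to act on is not exhibited. Second, and more fundamentally, the limit of $u$--$v$ paths through the contraction minors is not ``a finite $u$--$v$ path or a double ray'': all the limit gives you is an edge set in which every vertex has degree at most $2$, with $u$ and $v$ of degree $1$ --- that is, either a $u$--$v$ path, or a ray from $u$ plus a ray from $v$ \emph{together with arbitrarily many extra double-ray components} through which the two tails may have to be joined. Consequently your ``main obstacle'' paragraph understates the problem: it is not merely that two tails must converge to a common edge-end, but that the required $u$--$v$ arc in the closure of $Z'\sm\{e\}$ may pass through \emph{infinitely many} edge-ends, whereas your construction can only ever produce at most two; no cut argument about the two tails repairs this. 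The argument of Diestel and K\"uhn sidesteps the limit construction entirely: the technical core is a jumping-arc-type lemma asserting that if the standard subspace $\overline{Z'\sm\{e\}}$ contains no $u$--$v$ arc, then there is a finite cut $F$ of $G$ separating $u$ from $v$ with $F\cap(Z'\sm\{e\})=\es$; since $F$ must contain $e$, this gives $|F\cap Z'|=1$, odd, contradicting the cut condition. Proving that lemma for finitely separable (not just locally finite) graphs is where the real work lies, and your proposal defers to it at exactly that point, so as it stands you have proved only the easy direction.
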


\begin{proof}[Proof of Lemma~\ref{mat:subtop}]
Consider such a cycle $C$ of $\itop G$ that is completely contained in $H$, 
and suppose that $E(C)$ is not the edge set of an edge-disjoint
union of cylces of $\itop H$. 
By Theorem~\ref{cutcriterion} there is a finite cut $F$ of $H$
so that $E(C)\cap F$ is an odd set. The cut $F$ partitions $N(G-H)$
into two sets $A$ and $B$ (one of them possibly empty). Since
every two vertices in $G$ can separated by finitely many edges
there is a finite subset of $E(G)\sm E(H)$ that separates $A$
from $B$ in $G-E(H)$. Choosing a minimal such set $F'$
ensures that $F\cup F'$ is a finite cut of $G$. Then $|E(C)\cap (F\cup F')|=
|E(C)\cap F|$ is odd, implying with Theorem~\ref{cutcriterion} that
$E(C)$ is not the edge set of an edge-disjoint union of cycles of $\itop G$, in
particular that $C$ is not a cycle of $\itop G$, a contradiction. 
\end{proof}

We will make use of the fact that for a connected and finitely separable graph~$G$
there is always a common basis of $\mfc(G)$ and $\mc(G)$: 
\begin{theorem}$\!\!${\bf\cite{duality}}
\label{mat:acircST}
Every connected finitely separable graph $G$  has a spanning
tree that does not contain the edge set of any (infinite) cycle of $\itop G$.
\end{theorem}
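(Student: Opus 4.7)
The plan is to construct the desired spanning tree as a maximal independent set $T$ in the matroid $\mc(G)$. By axiom (IM) applied to $\mc(G)$ with $I=\emptyset$ and $X=E(G)$, such a $T$ exists: it is an inclusion-maximal edge set containing the edge set of no cycle of $\itop G$. Since $T$ contains no finite cycle in particular, $(V,T)$ is a forest, so it suffices to prove that $(V,T)$ is connected, for then $T$ is the edge set of a spanning tree with the required property.

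For connectedness I would argue by contradiction, leveraging Theorem~\ref{cutcriterion}. Suppose $(V,T)$ has a proper component $K$ and set $F:=E(K,V\sm K)$; then $F\cap T=\emptyset$ by definition of a component. If $F$ is finite, pick any $e\in F$. By maximality of $T$ the set $T+e$ contains the edge set of some cycle $C$ of $\itop G$, and necessarily $e\in E(C)$. Theorem~\ref{cutcriterion} then forces $|E(C)\cap F|$ to be even, so there is an edge $f\in E(C)\cap F$ with $f\neq e$; but then $f\in(T+e)\cap F=T\cap F$, contradicting $F\cap T=\emptyset$.

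The principal obstacle is the case when $F$ is infinite, which can occur even for finite $K$ in a finitely separable graph, since finite separability only guarantees finite cuts between \emph{pairs} of vertices and individual vertices may have infinite degree. To handle this, the plan is to refine the choice of $T$: rather than take an arbitrary maximal independent set of $\mc(G)$, start from a spanning tree $T_0$ of $G$ (which exists since $G$ is connected) and transfinitely swap one edge at a time, each swap removing one cycle of $\itop G$ currently contained in $T_0$ while preserving the spanning-tree property. Lemma~\ref{mat:subtop} is expected to be useful here in localizing each such swap to a finite portion of $G$ so that finite separability supplies the required finite cuts. The hard part is arranging the swaps to converge (in a well-founded transfinite order) to a spanning tree that is simultaneously independent in $\mc(G)$, and this is where the main technical work of the proof will lie.
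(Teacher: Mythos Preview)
First, note that the paper does not actually prove Theorem~\ref{mat:acircST}: it is quoted from~\cite{duality} and used as an input to the proof of Theorem~\ref{mat:sameconn}. So there is no proof here to compare your proposal against.

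On its own merits, your first plan has a genuine and irreparable gap: it is simply \emph{false} that every basis of $\mc(G)$ is graph-theoretically connected. In the one-way infinite ladder on vertices $\{a_i,b_i:i\ge 0\}$ with edges $a_ia_{i+1}$, $b_ib_{i+1}$ and $a_ib_i$, the union of the two rays $a_0a_1a_2\ldots$ and $b_0b_1b_2\ldots$ is a topological spanning tree of $\itop G$ (hence a basis of $\mc(G)$) with two graph-components and no cycle of $\itop G$. Your finite-cut argument via Theorem~\ref{cutcriterion} is correct as far as it goes, and it does show that no graph-component of a basis of $\mc(G)$ can have a finite nonempty edge-boundary; but the infinite-cut case is not a technicality to be patched---it is exactly where the statement you are trying to prove fails. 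The theorem genuinely requires a \emph{special} choice of basis, not an arbitrary one.

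Your second plan (start from an ordinary spanning tree and perform transfinite edge-swaps to eliminate infinite cycles while preserving graph-connectedness) points in the right direction, but as you yourself concede, arranging convergence of such a process is the entire content of the theorem, and nothing in the proposal addresses it. One further caution: invoking axiom~(IM) for $\mc(G)$ already presupposes that $\mc(G)$ is a matroid, a fact whose proof for general finitely separable graphs is itself nontrivial; you should verify that this does not implicitly rely on the result you are trying to establish.
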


\begin{theorem}
\label{mat:sameconn}
Let $G$ be a $2$-connected finitely separable graph.
Then $\cf_{\mfc(G)}(X)=\cf_{\mc(G)}(X)$ for all
$X\subseteq E(G)$.
\end{theorem}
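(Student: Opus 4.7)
If $|V[X]\cap V[Y]|=\infty$, then both connectivities are infinite by Lemma~\ref{mat:infint}, so I may assume $U:=V[X]\cap V[Y]$ is finite (whence $\cf_{\mfc(G)}(X)$ is finite, by Theorem~\ref{mat:matconn}). The plan is to exhibit bases $B_X$ of $\mfc(G)|X$ and $B_Y$ of $\mfc(G)|Y$ that are simultaneously bases of $\mc(G)|X$ and $\mc(G)|Y$, together with a single set $F\subseteq B_X\cup B_Y$ so that $T:=(B_X\cup B_Y)\setminus F$ is a basis of both $\mfc(G)$ and $\mc(G)$. The common value $|F|$ will then compute both connectivity functions, giving the theorem.

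For the construction, each component $K_i$ of $(V[X],X)$ is connected and inherits finite separability from $G$, so by Theorem~\ref{mat:acircST} there is a spanning tree $S_i$ of $K_i$ containing no cycle of $\itop{K_i}$. I set $B_X:=\bigcup_i E(S_i)$, which is a spanning forest of $(V[X],X)$ and therefore automatically a basis of $\mfc(G)|X$; $B_Y$ is built in the same way.

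The first real step is to verify that $B_X$ is also a basis of $\mc(G)|X$ (and analogously $B_Y$ of $\mc(G)|Y$). Maximality is immediate: any $e\in X\setminus B_X$ lies in some $K_i$ and closes a finite cycle with $S_i$, which is a cycle of $\itop G$. For independence I would argue by contradiction: if a cycle $C$ of $\itop G$ has $E(C)\subseteq B_X$, let $H$ be the induced subgraph of $G$ on $V[X]$. Since every edge of $G$ leaving $V[X]$ lies in $Y$ and hence has its $V[X]$-endpoint in $U$, we have $|N(G-H)|\leq|U|<\infty$, and Lemma~\ref{mat:subtop} produces a cycle $D$ of $\itop H$ with $E(D)\subseteq B_X$. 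Since $D$ lies in a single component $\tilde K_j$ of $H$, and $B_X\subseteq X$ avoids the (finitely many) $Y$-edges sitting inside $\tilde K_j$ (all with both endpoints in $U$), Theorem~\ref{cutcriterion} applied to $\tilde K_j$ and to its $X$-edge subgraph forces $E(D)$ to be an edge-disjoint union of cycles of the $\itop{K_i}$'s making up $\tilde K_j$. At least one of these cycles has its edge set inside some $S_i$, contradicting the choice of $S_i$.

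Finally, I pick $F\subseteq B_X\cup B_Y$ exactly as in the proof of Theorem~\ref{mat:matconn}(ii), which makes $T=(B_X\cup B_Y)\setminus F$ a spanning tree of $G$ and gives $|F|=\cf_{\mfc(G)}(X)$. The hard part will be checking that $T$ also contains no infinite cycle of $\itop G$, so that $T$ is automatically a basis of $\mc(G)$ (maximality in $\mc(G)$ being automatic because $T$ is a graph-theoretic spanning tree). The plan for this final verification is to cut a hypothetical infinite cycle $C\subseteq T$ at its (finitely many) $U$-vertices into alternating $X$- and $Y$-arcs, each living in the tree-like subspace $\bar{B_X}$ or $\bar{B_Y}$ from the previous step, and then to combine the uniqueness of the finite $T$-path between any two vertices of the spanning tree with the tree-likeness of $\bar{B_X}$ and $\bar{B_Y}$ to force two distinct arcs between a common pair of endpoints inside one of these spaces, contradicting its acircuit-ness. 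This topological step is where I expect the main technical effort to lie.
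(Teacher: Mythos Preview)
Your construction of common bases $B_X,B_Y$ via Theorem~\ref{mat:acircST} and Lemma~\ref{mat:subtop} is exactly what the paper does (the paper is in fact terser than you about the stray $Y$-edges inside $V[X]$, so your treatment of that point is more careful, not less).

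The divergence is in the last step, and there the paper makes the opposite choice: it picks $F\subseteq B_X\cup B_Y$ so that $T=(B_X\cup B_Y)\setminus F$ is a basis of $\mc(G)$, and then shows $T$ is also a basis of $\mfc(G)$. This order is advantageous because $\mc$-independence already implies $\mfc$-independence, so the only thing left is graph-connectedness of $(V,T)$. That is obtained by contradiction: the topological spanning tree is arc-connected, so two graph-components are joined by an arc $A$; cutting $A$ at the finitely many points of $U$ yields an infinite sub-arc $A'$ in the closure of $T_X$ (say); and the \emph{finite} $T_X$-path between the endpoints of $A'$ together with $A'$ then contains a cycle of $\itop G$ inside $T_X$, contradicting the choice of $T_X$.

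Your direction---starting from a graph spanning tree and ruling out infinite cycles---is genuinely harder, and the sketch you give has a gap. Cutting a hypothetical cycle $C$ only at its $U$-vertices need not produce monochromatic arcs: along $C$ the transition between $X$-edges and $Y$-edges can a~priori occur at an edge-end rather than at a vertex, so the ``alternating $X$- and $Y$-arcs'' decomposition is not justified as stated. Moreover, even granting that decomposition, the contradiction you outline (``two distinct arcs between a common pair of endpoints inside $\overline{B_X}$'') is not yet pinned down: the $C$-arcs between $U$-vertices need not coincide with the unique finite $T$-paths between the same vertices, and you have not explained how to extract internally disjoint arcs with common endpoints entirely inside $\overline{B_X}$ or $\overline{B_Y}$. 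The clean fix is simply to reverse the order as the paper does; then the only topological input needed is arc-connectedness of a topological spanning tree, and the contradiction comes from the finite path already present in $B_X$ rather than from juggling infinite arcs.
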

\begin{proof}
Consider a set $X\subseteq E(G)$ and put $Y:=E(G)\sm X$. 
If $V[X]\cap V[Y]$ is an infinite set then 
$\cf_{\mfc(G)}(X)=\cf_{\mc(G)}(X)$
by Lemma~\ref{mat:infint}.

So, assume $V[X]\cap V[Y]$ to be finite. By Theorem~\ref{mat:acircST}
there is for each component $K$ of $(V[X],X)$ a spanning tree not containing
the edge set of any cycle of $\itop K$. 
Lemma~\ref{mat:subtop} ensures
that also no edge set of any cycle of $\itop G$ lies in this spanning tree. 
Consequently, the union $T_X$
of the edge sets of those spanning trees is a basis of $\mfc(G)|X$
as well as of $\mc(G)|X$. We define $T_Y$ analogously for $(V[Y],Y)$.

Next, pick $F\subseteq T_X\cup T_Y$ so that $(T_X\cup T_Y)\sm F$ is a 
basis of $\mc(G)$. Clearly, the set $(T_X\cup T_Y)\sm F$
is independent in $\mfc(G)$, too. If it is even a basis
in $\mfc(G)$ then we have 
$\cf_{\mfc(G)}(X)=|F|=\cf_{\mc(G)}(X)$
as desired. So, suppose $T:=(T_X\cup T_Y)\sm F$ fails to be a 
basis, which implies that $(V[T],T)$ is not (graph-theoretically)
connected. 
As a basis of $\mc(G)$ for a connected graph, $T$ 
is the edge set of a topological spanning tree of $\itop G$.
In particular, the topological spanning tree is path-connected and will 
therefore contain an arc $A$ between two vertices of distinct (graph-theoretical) 
components of $(V[T],T)$. The arc $A$ cannot be a path in the graph, and consequently 
it passes through infinitely many edges. Moreover, it is not hard to check
that if $A$ contains a vertex from $X$ and from $Y$ then it passes through 
the finite vertex set $V[X]\cap V[Y]$. 
%Tatsächlich ist das nicht ganz trivial...
Thus there is then also an arc $A'$ between two vertices that has infinitely many edges 
and that is 
completely contained in the closure of $T_X$ or of $T_Y$ (taken in $\itop G$).
We may assume that $E(A')\subseteq T_X$. 
However, as any two vertices in $(V[T_X],T_X)$ are connected
by a finite path as well, such a finite path between the endvertices
of $A'$ plus $A'$ will contain a cycle of $\itop G$ that has all its edges in $T_X$.
This contradicts the definition of $T_X$. Thus, $(V[T],T)$ is connected
and hence $T$ a basis of $\mfc(G)$.
\end{proof}

\section{Proof of main result}

Let us recall the definition of Tutte-connectivity.
A \emph{$\ell$-Tutte-separation} of a graph $G$ is a partition $(X,Y)$ of 
$E(G)$ so that $|X|,|Y|\geq \ell$ and so that $|V[X]\cap V[Y]|\leq \ell$.
We say that a graph $G$ is \emph{$k$-Tutte-connected} 
if $G$ has no $\ell$-Tutte-separation for any $\ell<k$.

The following theorem clearly includes Theorem~\ref{mainthm}:
\begin{theorem}
\label{mat:TutteMat}
Let $G$ be a finitely separable graph. Then for 
integers $k\geq 2$ the following statements are equivalent:
\begin{enumerate}[\rm (i)]
\item $G$ is $k$-Tutte-connected;
\item $\mfc(G)$ is $k$-connected; and
\item $\mc(G)$ is $k$-connected.
\end{enumerate}
\end{theorem}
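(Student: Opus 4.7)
The three-way equivalence splits naturally into two parts. Before anything else, note that if $G$ fails to be $2$-connected, then a cut vertex or a disconnection yields a $1$-Tutte-separation of $G$ as well as a $1$-separation of both cycle matroids, so $(i),(ii),(iii)$ all fail for every $k\geq 2$. We may therefore assume throughout that $G$ is $2$-connected, which is exactly the setting in which Theorems~\ref{mat:matconn} and~\ref{mat:sameconn} are available.

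With $G$ now $2$-connected, the equivalence $(ii)\Leftrightarrow(iii)$ is immediate from Theorem~\ref{mat:sameconn}: the two matroids have the same connectivity function, hence exactly the same $\ell$-separations, and therefore agree on $k$-connectedness for every $k$.

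For $(i)\Leftrightarrow(ii)$ we use Theorem~\ref{mat:matconn}. The direction $(ii)\Rightarrow(i)$ is the easy one: given an $\ell$-Tutte-separation $(X,Y)$ with $\ell<k$, the set $V[X]\cap V[Y]$ has size at most $\ell$, so Theorem~\ref{mat:matconn}(ii) together with $c(X),c(Y)\geq 1$ yields $\cf_{\mfc(G)}(X)\leq\ell-1$, exhibiting $(X,Y)$ as an $\ell$-separation of $\mfc(G)$. The converse $(i)\Rightarrow(ii)$ is the substantive direction. Starting from an $\ell$-separation $(X,Y)$ of $\mfc(G)$ with $\ell<k$, Theorem~\ref{mat:matconn}(i) forces $m:=|V[X]\cap V[Y]|$ to be finite, and Theorem~\ref{mat:matconn}(ii) rearranges to $m\leq\ell+c(X)+c(Y)-2$. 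If $m\leq\ell$ then $(X,Y)$ is itself an $\ell$-Tutte-separation; otherwise $c(X)+c(Y)\geq 3$, and the plan is to shift an entire component of $(V[X],X)$ across to $Y$ when $c(X)\geq 2$, or symmetrically a component of $(V[Y],Y)$ across to $X$ when $c(Y)\geq 2$. A direct calculation, needing only that $G$ is connected to guarantee that the shifted component $K$ meets the other side in at least one vertex $m_K\geq 1$, shows that such a move strictly reduces $m$ (by $m_K$) and strictly reduces $c(X)+c(Y)$ (by the number $d\in[1,m_K]$ of components of the target side met by $K$), while $\cf_{\mfc(G)}$ changes by $d-m_K\leq 0$ and thus does not increase. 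Iterating terminates either when $m\leq\ell$, or when $c(X)=c(Y)=1$, in which last case Theorem~\ref{mat:matconn}(ii) forces $m=\cf_{\mfc(G)}(X)+1\leq\ell<k$ as well, and the resulting partition is a Tutte-separation of order $<k$.

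The main obstacle is keeping track of the side sizes $|X|,|Y|$ during the iteration, because shifting a large component might drop $|X|$ below $\ell$ and so spoil the separation status prematurely. This is exactly the bookkeeping issue from Tutte's original finite-graph proof~\cite{tutte66}; the standard resolution is always to shift the component with smallest edge set, and to verify directly from $m\leq\ell+c(X)+c(Y)-2$ that in the one residual case where even this smallest shift is already too large, the current partition satisfies the weaker Tutte-separation inequality of some order $<k$ by itself. The crucial new ingredient that makes the whole argument transfer to infinite graphs is precisely the finiteness of $m$ provided by Theorem~\ref{mat:matconn}(i), which reduces the problem after the first step to an essentially finite combinatorial one to which Tutte's argument applies.
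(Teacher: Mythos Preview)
Your overall architecture matches the paper's: reduce to $G$ $2$-connected, get $(ii)\Leftrightarrow(iii)$ from Theorem~\ref{mat:sameconn}, get the easy direction of $(i)\Leftrightarrow(ii)$ straight from Theorem~\ref{mat:matconn}, and for the hard direction massage an $\ell$-separation by moving components of one side into the other while tracking $|V[X]\cap V[Y]|$, $c(X)$, $c(Y)$ via Theorem~\ref{mat:matconn}(ii).

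The difference is in how the ``bookkeeping'' is carried out, and here your write-up has a genuine gap. You iterate, always shifting a smallest component, and then hand off the residual side-size case to ``Tutte's original finite-graph proof'' on the grounds that finiteness of $m$ makes the problem ``essentially finite''. But it is not: both $X$ and $Y$, and the individual components being shifted, may be infinite, so Tutte's argument does not literally apply and ``smallest component'' may not even discriminate. The paper avoids this by first observing that $G$ may be assumed infinite (the finite case \emph{is} Tutte), fixing $|Y|=\infty$, and replacing your iteration by a single minimality choice of $(X,Y)$ with $c(X)+c(Y)$ least. Infinitude of $Y$ is what makes the side-size problem disappear on one side: one can move a component of $Y$ into $X$ while keeping $|Y'|=\infty$, forcing $c(Y)=1$ by minimality. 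Then, rather than iterating further, the paper proves directly that some component $M$ of $(V[X],X)$ satisfies both $|V[M]\cap V[Y]|\le\ell$ and $|E(M)|\ge |V[M]\cap V[Y]|$, and takes $(\bar X,\bar Y)=(E(M),E(G)\setminus E(M))$ as the desired Tutte-separation; the side-size constraints $|\bar X|\ge k$ and $|\bar Y|=\infty$ are then immediate. That explicit extraction of $M$ is exactly the piece your sketch is missing.
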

\begin{proof}
Observe that we may assume $G$ to be  $2$-connected
and that $G$ is an infinite graph. (For finite graphs, 
see Tutte~\cite{tutte66}---note that
$\mfc(G)$ and $\mc(G)$ coincide in this case.)
In light of Theorem~\ref{mat:sameconn} we only need to 
prove that $G$ has a $k$-Tutte-separation with $k\leq m$
if and only if $\mfc(G)$ has an $\ell$-separation
with $\ell\leq m$.

First, let $(X,Y)$ be a $k$-Tutte-separation $(X,Y)$ of $G$,
which implies $|V[X]\cap V[Y]|\leq k$.
Since $c(X),c(Y)\geq 1$ this yields with Theorem~\ref{mat:matconn}
that $\cf_{\mfc(G)}\leq k-1$. Consequently, $(X,Y)$
is a $k$-separation of $\mfc(G)$.

Conversely, let there be an $\ell$-separation in $\mfc(G)$, and choose an
$\ell$-separation $(X,Y)$ of $\mfc(G)$ so that
$c(X)+c(Y)$ is minimal among all $\ell$-separations of $\mfc(G)$.
Since $G$ is infinite, we may assume that $Y$ is an infinite
set.

First, we claim that 
\begin{equation}
\label{Y:conn}
\emtext{$(V[Y],Y)$ is connected}.
\end{equation}
If $(V[Y],Y)$ is not connected then there is a component $K$ of $(V[Y],Y)$
so that $Y':=Y\sm E(K)$ is an infinite set. With $X':=X\cup E(K)$ we
see that both $X'$ and $Y'$ have at least $\ell$ elements. Moreover, it holds that
$|V[X]\cap V[Y]|= |V[X']\cap V[Y']| + |V[X]\cap V[K]|$ and $c(Y)=c(Y')+1$.
The set of  components of $(V[X'],X')$ is comprised of  components of $(V[X],X)$
and of the union of those components of $(V[X],X)$ that have a vertex with $K$ 
in common together with $K$. Since there are at most $|V[X]\cap V[K]|$
components of the latter kind, we obtain $c(X)\leq c(X')+|V[X]\cap V[K]|-1$.
It follows with Theorem~\ref{mat:matconn} that
\begin{eqnarray*}
\cf_{\mfc(G)}(X',Y')& =&|V[X']\cap V[Y']|-c(X')-c(Y')+1\\
&\leq & |V[X]\cap V[Y]| -|V[X]\cap V[K]|-c(X)\\
&&+\,|V[X]\cap V[K]|-1-c(Y)+1+1\\
&=& |V[X]\cap V[Y]|-c(X)-c(Y)+1\leq \ell-1.
\end{eqnarray*}
Thus, $(X',Y')$ is an $\ell$-separation with $c(X')+c(Y')< c(X)+c(Y)$,
contradicting the choice of $(X,Y)$.

Second, we show that
\begin{equation}
\label{Xcomps}
\emtext{$|V[K]\cap V[Y]|\leq \ell$ for every component $K$ of $(V[X],X)$.}
\end{equation}
Suppose there exists a component $M$ of $(V[X],X)$ with $|V[M)]\cap V[Y]|\geq \ell+1$.
Denoting by $\mathcal K$ the components of $(V[X],X)$ we get
\begin{eqnarray*}
\ell-1&\geq & |V[X]\cap V[Y]|-c(X)-c(Y)+1\\
&\geq&\sum_{K\in\mathcal K\sm\{M\}} |V[K]\cap V[Y]| + (\ell+1) -c(X)-c(Y)+1.
\end{eqnarray*}
That $G$ is connected implies $|V[K]\cap V[Y]|\geq 1$ for every $K\in\mathcal K$.
Hence
\[
\ell-1\geq  (c(X)-1) + (\ell+1) -c(X)-c(Y)+1 = \ell+1-c(Y).
\]
This yields $c(Y)\geq 2$, which is impossible by~\eqref{Y:conn}.
Therefore,~\eqref{Xcomps} is proved.

Next, we see that
\begin{equation}
\label{goodcomp}
\emtext{there is a  component $M$ of $(V[X],X)$ with $|E(M)|\geq |V[M]\cap V[Y]|$.}
\end{equation}
If~\eqref{goodcomp} is false then we have $|V[K]\cap V[Y]|\geq |E(K)|+1$ for all $K\in\mathcal K$.
This, however, implies with $c(Y)=1$ that
\begin{eqnarray*}
\ell-1&\geq & |V[X]\cap V[Y]|-c(X)-c(Y)+1\\
&=&\sum_{K\in\mathcal K} |V[K]\cap V[Y]|  -c(X)\\
&\geq&\sum_{K\in\mathcal K} (|E(K)|+1)  -c(X) \,=\, |X|.
\end{eqnarray*}
As $(X,Y)$ is an $\ell$-separation, $X$ is required to have at least $\ell$ elements,
which shows that~\eqref{goodcomp} holds.

Finally, with the component $M$ from~\eqref{goodcomp} we set $\bar X:=E(M)$
and $\bar Y:=E(G)\sm E(M)$. Then $k:=|V[\bar X]\cap V[\bar Y]|=|V[M]\cap V[Y]|\leq \ell$,
by~\eqref{Xcomps}. As $|\bar X|\geq k$ and $|\bar Y|=\infty$ it follows
that $(\bar X,\bar Y)$ is a $k$-Tutte-separation with $k\leq \ell$, as
desired.
\end{proof}
We remark that the arguments in the proof are not new.
Indeed,~\eqref{Y:conn} is inspired by Tutte~\cite{tutte66} and 
steps~\eqref{Xcomps},~\eqref{goodcomp} are quite similar to the proof of 
Lemma~5.3 in~\cite{endduality}.

\section{Tutte-connectivity and duality}

In this final section, we deduce a matroidal proof of the fact that
Tutte-connectivity is invariant under duality (Theorem~\ref{tconnthm}). 

Two finitely separable countable graphs $G$ and $G^*$ 
defined on the same edge set $E$
are a pair 
of \emph{duals} if  any edge set $F\subseteq E$ is the edge set of
a cycle of $\itop G$
if and only if $F$ is a bond of $G^*$.
(A \emph{bond} is a minimal non-empty cut.)
As for finite graphs, a (countable) finitely separable graph is planar
if and only if it has a dual, see~\cite{duality} for a proof and more details.

We need two more results.
\begin{lemma}\selfcite{linkingthm}\label{kapduallem}
The connectivity function is invariant under duality, that is, 
$\cf_M(X)=\cf_{M^*}(X)$ for any subset $X$ of a matroid $M$.
\end{lemma}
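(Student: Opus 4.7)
The plan is to transport the basis data witnessing $\cf_M(X)$ to witnessing data for $\cf_{M^*}(X)$ via the bijection $T\leftrightarrow E\setminus T$ between bases of $M$ and bases of $M^*$. I would rely on two standard facts about infinite matroids from \cite{infaxioms}: that $T$ is a basis of $M$ iff $E\setminus T$ is a basis of $M^*$, and that for any basis $B_Y$ of $M|Y$ (with $Y:=E\setminus X$) a subset $B_1\subseteq X$ is a basis of the contraction $M/Y$ iff $B_1\cup B_Y$ is a basis of $M$. Combined with the identity $M^*|X=(M/Y)^*$, the second fact shows that the bases of $M^*|X$ are exactly the sets $X\setminus B_1$ with $B_1$ a basis of $M/Y$, and symmetrically for $M^*|Y$.

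I would then fix a basis $B$ of $M|X$ and a basis $B'$ of $M|Y$. Using the freedom noted in the paper to place $F$ inside $B$, I pick $F\subseteq B$ with $T:=(B\setminus F)\cup B'$ a basis of $M$, so that $|F|=\cf_M(X)$. Using the symmetric freedom I pick $F'\subseteq B'$ with $B\cup(B'\setminus F')$ a basis of $M$, so that $|F'|=\cf_M(X)$ as well. The contraction fact then gives that $B\setminus F$ is a basis of $M/Y$ and $B'\setminus F'$ is a basis of $M/X$, whence $B^*:=(X\setminus B)\cup F$ is a basis of $M^*|X$ and $C^*:=(Y\setminus B')\cup F'$ is a basis of $M^*|Y$.

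To assemble the witness for $\cf_{M^*}(X)$, I would observe that $D:=E\setminus T$ is a basis of $M^*$ and directly compute $D=((X\setminus B)\cup F)\cup(Y\setminus B')=B^*\cup(Y\setminus B')$. Since $Y\setminus B'\subseteq C^*$ and $F'\subseteq Y$ is disjoint from $B^*\subseteq X$, it follows that $(B^*\cup C^*)\setminus F'=D$, which is a basis of $M^*$. The definition of $\cf_{M^*}$ then gives $\cf_{M^*}(X)\leq|F'|=\cf_M(X)$. Applying the same argument with the roles of $M$ and $M^*$ swapped, and using $(M^*)^*=M$, yields the reverse inequality.

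The main technical obstacle is not the above set-theoretic manipulation but the legitimacy of the underlying facts in the infinite setting: the well-definedness of $\cf_M$ with the claimed freedom to place $F$ inside either $B$ or $B'$, and the duality and contraction identities for bases. Once these are granted (as they are in \cite{infaxioms} and \cite{linkingthm}), the argument reduces to the short combinatorial calculation above.
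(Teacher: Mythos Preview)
The paper does not prove this lemma at all; it is simply cited from~\cite{linkingthm} via the \verb|\selfcite| macro, so there is no proof here to compare yours against. Your argument is correct and is essentially the standard one: transport a witnessing triple $(B,B',F)$ for $\cf_M(X)$ to a witnessing triple $(B^*,C^*,F')$ for $\cf_{M^*}(X)$ using basis complementation and the identity $M^*|X=(M/Y)^*$.

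One small remark on efficiency: once you have produced bases $B^*$ of $M^*|X$, $C^*$ of $M^*|Y$, and $F'\subseteq B^*\cup C^*$ with $(B^*\cup C^*)\setminus F'$ a basis of $M^*$, the well-definedness of $\cf$ (which you explicitly invoke) already gives $\cf_{M^*}(X)=|F'|$, not merely $\cf_{M^*}(X)\leq |F'|$. So the final appeal to $(M^*)^*=M$ for the reverse inequality is unnecessary; you have equality in one stroke. This does not affect correctness, only length.
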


\begin{theorem}\selfcite{matgraphs}
\label{graphdual}
Let $G$ and $G^*$ be a pair of countable  dual graphs, each finitely separable, and defined on the same edge set~$E$. Then 
\(
\mc^*(G)=\mfc(G^*).
\)
\end{theorem}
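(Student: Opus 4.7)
The plan is to factor the identity through $\mfb(G)$, writing $\mc^*(G)=\mfb(G)=\mfc(G^*)$, and to prove the second equality by matching circuit families.

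For the first equality I would use the fact, recalled in the excerpt, that $\mc(G)=\mfb(G)^*$. Since the duality of infinite matroids is involutive, i.e.\ $M^{**}=M$ (see \cite{infaxioms}), taking duals yields $\mc^*(G)=\mfb(G)$. In particular, the circuits of $\mc^*(G)$ are exactly the finite bonds of $G$.

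For the second equality I would argue on the level of circuits, using that a matroid is determined by its circuit family. The circuits of $\mfc(G^*)$ are the edge sets of finite cycles of $G^*$, and the circuits of $\mfb(G)$ are the finite bonds of $G$, so it suffices to show these two families coincide. The duality hypothesis says that an edge set $F\subseteq E$ is the edge set of a cycle of $\itop G$ if and only if $F$ is a bond of $G^*$. Since ``being a pair of duals'' is symmetric in $G$ and $G^*$, applying the same statement to the pair $(G^*,G)$ yields that $F$ is the edge set of a cycle of $\itop{G^*}$ if and only if $F$ is a bond of $G$. Restricting to finite $F$, and noting that a finite cycle of $\itop{G^*}$ is simply a finite cycle of $G^*$ (its underlying edge set is finite, so no edge-end can lie on it), delivers the desired matching.

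The main obstacle, as I see it, is clean bookkeeping around the symmetry of the graph-duality relation and the passage between the topological and combinatorial meanings of ``finite cycle''. Symmetry of a pair of duals is a standard property, proved in \cite{duality}; the topological/combinatorial identification is immediate because a homeomorphic image of $S^1$ in $\itop{G^*}$ whose underlying edge set is finite sits inside a finite subgraph and therefore coincides with an ordinary finite cycle of $G^*$. With these two points in hand, the argument reduces to the chain $\mc^*(G)=\mfb(G)=\mfc(G^*)$ of matroid identities.
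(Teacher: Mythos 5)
Your proposal is correct, and since the paper merely imports Theorem~\ref{graphdual} from \cite{matgraphs} without giving a proof, your chain $\mc^*(G)=\mfb(G)=\mfc(G^*)$ is exactly the natural derivation and matches how the identity is established in the cited source: the first equality from $\mc(G)=\mfb(G)^*$ plus involutivity of infinite-matroid duality, the second by matching the circuit families (finite bonds of $G$ versus finite cycles of $G^*$) via the symmetry of the dual-pair relation. You correctly identify the two load-bearing points: the symmetry of duality is a genuine theorem of \cite{duality} (valid under the finite-separability hypothesis in force here, so your appeal to it is legitimate rather than circular), and a circle in $\itop{G^*}$ with finitely many edges contains no edge-end and is an ordinary finite cycle, which follows from the density of edges in circles; note also that invoking $\mc(G)=\mfb(G)^*$ creates no circularity, since that fact holds for all finitely separable graphs and does not presuppose the existence of a dual.
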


Consider a pair of countable dual graphs $G$ and $G^*$.
Then, by Theorem~\ref{mat:TutteMat},
 $G$ is $k$-Tutte-connected if and only if $\mfc(G)$
is $k$-connected. Since $\mfc(G)=(\mc(G^*))^*$ by Theorem~\ref{graphdual}
and since matroid connectivity is invariant under taking duals 
(Lemma~\ref{kapduallem})
this is precisely the case when $\mc(G^*)$ is $k$-connected. 
Finally, Theorem~\ref{mat:TutteMat} again shows that 
$\mc(G^*)$ is $k$-connected if and only if $G^*$ is $k$-Tutte-connected.
This proves Theorem~\ref{tconnthm}.

\bibliographystyle{amsplain}
\bibliography{../graphs}

\small
\vskip2mm plus 1fill
\parindent=0pt%\obeylines

$\,$\vfill
Version 23 Oct 2012
\bigbreak

Henning Bruhn
{\tt <bruhn@math.jussieu.fr>}\\
\'Equipe Combinatoire et Optimisation\\
Universit\'e Pierre et Marie Curie\\
4 place Jussieu\\
75252 Paris cedex 05\\
France

\end{document}